\newcommand{\raisecomma}{\raisebox{2pt}{$,$}}
\newcommand{\raisedot}{\raisebox{2pt}{$.$}}
\newcommand{\diag}{\text{diag}}
\newcommand{\ve}{\varepsilon}
\newcommand{\R}{{\mathbb R}}
\newtheorem{theorem}{Theorem}
\newtheorem{corollary}{Corollary}
\newtheorem{lemma}{Lemma}
\newtheorem{remark}{Remark}
\begin{document}
\bibliographystyle{plain}
\title{~\\[-45pt] 
Bounds on determinants of perturbed\\ diagonal matrices
}
\author{Richard P.\ Brent\\
Australian National University\\
Canberra, ACT 0200,
Australia\\
\and
Judy-anne H.\ Osborn\\
The University of Newcastle\\
Callaghan, NSW 2308,
Australia\\
\and
Warren D.\ Smith\\
Center for Range Voting\\			
21 Shore Oaks Drive, Stony Brook\\ NY 11790, USA\\
}

\date{\today}

\maketitle
\thispagestyle{empty}                   

\begin{abstract}
We give upper and lower bounds on the determinant of a perturbation of
the identity matrix or, more generally, a perturbation of a nonsingular
diagonal matrix. The matrices considered are, in general, diagonally
dominant. The lower bounds are best possible, and in several cases they are
stronger than 
well-known bounds due to Ostrowski and other authors.
If $A = I-E$ is a real $n \times n$ matrix and the elements of $E$ are bounded in
absolute value by $\ve \le 1/n$, then a lower bound of
Ostrowski (1938) is 
$\det(A) \ge 1-n\ve$.
We show that if, in addition, the diagonal elements of $E$ are
zero, then a best-possible lower bound is
\[\det(A) \ge (1-(n-1)\ve)\,(1+\ve)^{n-1}.\]
Corresponding upper bounds are respectively
\[\det(A) \le (1 + 2\ve + n\ve^2)^{n/2}\]
and
\[\det(A) \le (1 + (n-1)\varepsilon^2)^{n/2}.\]
The first upper bound is stronger than Ostrowski's bound
(for $\ve < 1/n$) $\det(A) \le (1 - n\ve)^{-1}$.
The second upper bound generalises Hadamard's inequality, which is the
case $\varepsilon = 1$.
A necessary and sufficient condition for our upper bounds to be best
possible for matrices of order $n$ and all positive $\ve$
is the existence of a skew-Hadamard matrix of order~$n$.
\end{abstract}

\pagebreak[4]

\section{Introduction}		\label{sec:intro}
 
Many bounds on determinants of diagonally dominant matrices $A$ have
been given in the literature.  See, for example,
Muir~\cite{Muir},
Ostrowski~\cite{Ostrowski83},
Price~\cite{Price},
and more recently
Bhatia and Jain~\cite{BJ},
Elsner~\cite{Elsner},
Horn and Johnson~\cite{HJ},
Ipsen and Rehman~\cite{IR},
Li and Chen~\cite{Li}, 
and the references given there.

Except in Theorem~\ref{thm:general_lower_bound}, we restrict attention to
the case that we have uniform upper bounds $|a_{ij}| \le \ve$ on the sizes of the 
off-diagonal entries $a_{ij}$ ($i\ne j)$ of $A$.
Since the nonzero 
diagonal elements $a_{ii}$ of $A$ can be assumed to be~$1$ (or close
to~$1$) by row or column scaling, we assume
that $a_{ii} = 1$ or $|a_{ii} - 1| \le \delta$, where $\delta$ is a small
parameter, possibly different from~$\ve$.  
In Corollary~\ref{cor:super_duper} we relax the condition on $a_{ii}$
to a one-sided constraint $a_{ii} \ge 1-\delta$.
The results have applications to proofs of 
lower bounds for the Hadamard maximal determinant
problem; this was our original motivation
(see~\cite{rpb253,rpb257}).
Regarding other reasons for considering bounds
on determinants, we refer to
Bornemann~\cite[footnote~$4$]{Bornemann}.

For purposes of comparison with our bounds, we first state some known
bounds.  For a square matrix $A = (a_{ij})$ of order $n$,
define 
\[h_i := |a_{ii}| - \sum_{j \ne i}|a_{ij}|
 = 2|a_{ii}| - \sum_{j=1}^n|a_{ij}|\;\text{ for }\; 1 \le i \le n,\]
and assume that the $h_i$ are positive. It is well-known that
$\det(A) \ne 0$; see Taussky~\cite{Taussky} for the history of this
theorem.  Ostrowski~\cite{Ostrowski37a} showed that
\begin{equation}	\label{eq:Ostrowski1a}
|\det(A)| \ge h_1h_2\cdots h_n.
\end{equation}

If we assume that $\diag(A) = I$ and that the
off-diagonal elements of $A$ satisfy $|a_{ij}| \le \ve$ ($i \ne j$), 
where $(n-1)\ve < 1$, then
Ostrowski's bound~\eqref{eq:Ostrowski1a} reduces to
\begin{equation}	\label{eq:Ostrowski1b}
\det(A) \ge (1 - (n-1)\ve)^n.
\end{equation}
The same bound follows from Gerschgorin's theorem~\cite{Gerschgorin,Varga}.
Observe that the right side of~\eqref{eq:Ostrowski1b} is
$1 - n(n-1)\ve + O(\ve^2)$, so the perturbation appears to be of order
$\ve$.  As pointed out by
Ostrowski~\cite{Ostrowski37b,Ostrowski52,Ostrowski54}, 
the perturbation is
actually of order $\ve^2$, so the bound~\eqref{eq:Ostrowski1b} is
weak, at least for small $\ve$. Similar remarks apply
to the inequalities of Oeder~\cite{Oeder} and Price~\cite{Price}.
An improved lower bound given by Ostrowski~\cite[Satz~VI]{Ostrowski37b}
reduces (under the same assumptions on~$A$) to
\begin{equation}	\label{eq:Ostrowski2a}
\det(A) \ge \left(1 - (n-1)^2\ve^2\right)^{\lfloor n/2 \rfloor}.
\end{equation}
Ostrowski~\cite[Satz~VI]{Ostrowski37b}	
also gives an upper bound, which reduces to
\begin{equation}	\label{eq:Ostrowski2b}
\det(A) \le \left(1 + (n-1)^2\ve^2\right)^{\lfloor n/2 \rfloor}.
\end{equation}
In both these bounds 
the perturbation is clearly of order $\ve^2$, as expected
from consideration of the case $n=2$,
where $1-\ve^2 \le \det(A) \le 1+\ve^2$.

A different lower bound, due to von Koch~\cite{Koch}
(see Ostrowski~\cite[\S2]{Ostrowski37a}), reduces under
the same assumptions 
to
\begin{equation}	\label{eq:Koch}
\det(A) \ge e^{n(n-1)\ve}(1-(n-1)\ve)^n.
\end{equation}
For $n>1$ the inequality~\eqref{eq:Koch}
is clearly stronger than~\eqref{eq:Ostrowski1b}, but a computation
shows that it is weaker than~\eqref{eq:Ostrowski2a} under our
assumptions.

Suppose we allow a perturbation of the diagonal elements, so
$A = I-E$ where $|e_{ij}| \le \ve < 1/n$, $1 \le i, j \le n$.
A pair of bounds given by
Ostrowski in~\cite[eqn.~(5,5)]{Ostrowski38} is, in our notation,
\begin{equation}	\label{eq:Ostrowski3a}
\det(A) \ge 1 - n\ve
\end{equation}
and
\begin{equation}	\label{eq:Ostrowski3b}
\det(A) \le \frac{1}{1 - n\ve}\,\raisedot
\end{equation}

In \S\ref{sec:lower} we consider lower bounds 
on $\det(A)$, where $A$ is a
matrix of the form $I-E$, and the elements of $E$ are small in some
sense.  In Theorem~\ref{thm:general_lower_bound} a matrix $F$ of
non-negative elements $f_{ij}$ is given, and $|e_{ij}| \le f_{ij}$.
The theorem gives a lower bound $\det(I-F)$ 
on $\det(A)$ under the
condition that $\rho(F) \le 1$, where $\rho(\cdot)$ denotes the spectral
radius.\footnote{Thus $I-F$ is a (possibly singular) M-matrix,
but $A$ is not necessarily a Z-matrix. 
}
Theorem~\ref{thm:general_lower_bound} is similar to
\cite[Thm.\ 2.5.4(c)]{HJ}, but less restrictive as the $e_{ij}$ may be
positive or negative.%
\footnote{Theorem~\ref{thm:general_lower_bound} is
close to the (real case of)~\cite[problem 2.5.31(d)]{HJ}.
Our proof is similar to the sketch given in~\cite[problem 2.5.30]{HJ}.
}

Corollary~\ref{cor:super_duper} gives a best-possible lower bound on $\det(A)$
when the diagonal elements of $E$ satisfy $e_{ii} \le \delta$
(only a one-sided constraint is necessary)
and the off-diagonal elements satisfy $|e_{ij}| \le \ve$, assuming
that $\delta + (n-1)\ve \le 1$. 
Corollaries~\ref{cor:optimal_pert_bound} and~\ref{cor:optimal_pert_bound2}
give lower bounds that are special cases of
Corollary~\ref{cor:super_duper}.
Corollary~\ref{cor:optimal_pert_bound} is equivalent to
Ostrowski's lower bound~\eqref{eq:Ostrowski3a},
but our other lower-bound results
appear to be new.
Corollary~\ref{cor:optimal_pert_bound2} is much stronger
than the bound~\eqref{eq:Ostrowski1b}, and also slightly stronger than
Ostrowski's improved bound~\eqref{eq:Ostrowski2a} if $n>2$.

In Theorem~\ref{thm:nice_pert_bound} we deduce 
(from Corollary~\ref{cor:optimal_pert_bound2})
a lower bound on $\det(A)$
when the condition
$|a_{ij}| \le \ve|a_{ii}|$ holds for all off-diagonal elements $a_{ij}$,
and $(n-1)\ve \le 1$.
Similar remarks apply to Theorem~\ref{thm:nice_pert_bound}
as to Corollary~\ref{cor:optimal_pert_bound2}.

In \S\ref{sec:upper} we consider upper bounds on $\det(A)$
when the elements of $E = I-A$ are (usually) small.
Upper bounds when $A$ is close to a diagonal matrix follow
by row or column scaling, as in the proof of Theorem~\ref{thm:nice_pert_bound}.
Theorem~\ref{thm:general_upper_bound} assumes that
$|e_{ij}| \le \ve$ and gives two upper bounds, the second applying
under the extra condition that $\diag(E) = 0$. In the case \hbox{$\ve=1$},
the second bound~\eqref{eq:upper2}
reduces to Hadamard's upper bound $n^{n/2}$
for the determinants of $\{\pm1\}$-matrices.
For $\ve>0$, our first upper bound~\eqref{eq:upper1}
is always stronger than
Ostrowski's upper bound~\eqref{eq:Ostrowski3b}.
Our second upper bound~\eqref{eq:upper2} is stronger than
Ostrowski's upper bound~\eqref{eq:Ostrowski2b}
if $n > 2$ and $(n-1)\ve < 1$ (this condition on~$\ve$ is
necessary for the validity of~\eqref{eq:Ostrowski2b}, but is not
required for~\eqref{eq:upper2}).

To summarise, we can not improve on Ostrowski's
inequality~\eqref{eq:Ostrowski3a}
as it is best-possible, but we do improve on the
inequalities~\eqref{eq:Ostrowski1b}--\eqref{eq:Koch}
and~\eqref{eq:Ostrowski3b}.

As shown in Theorem~\ref{thm:nec_sharpness},
the upper bounds of Theorem~\ref{thm:general_upper_bound} are best possible
for matrices of order $n$ if and only if there exists
a skew-Hadamard matrix of order~$n$.
This condition is known to hold for $n = 1, 2$, and all multiples
of four up to and including $4\times 68$, 
as well as infinitely many larger~$n$, such as all powers of two,
see~\cite{CD,Djokovic,GKS,Smith-skewH}.

Remark~\ref{remark:U} gives attainable determinants that are close to the
upper bounds of Theorem~\ref{thm:general_upper_bound}.
These are of interest when $n$ is not 
the order of a skew-Hadamard \hbox{matrix},
since in such cases the bounds of Theorem~\ref{thm:general_upper_bound}
are not best-possible, and the best-possible bounds are only known
for a few small orders.

In \S\ref{subsec:small} we consider
some small orders~$n$. The limited evidence suggests
that the behaviour depends on the congruence class $n \bmod 4$.
This is not
surprising, as it also appears to be true for the (related)
Hadamard maximal determinant problem~\cite{Orrick-www}.
\pagebreak[3]

Via the transformation $\ve \mapsto 1/x$, we
easily obtain upper-bound results for matrices whose off-diagonal
entries are in $[-1,1]$ 
and whose diagonal elements are all
equal to a real parameter $x$.

In the case $\ve=1$, our upper-bound results are related to
results on $\{\pm1\}$-matrices of skew-symmetric type~\cite{AAFG},
conference matrices~\cite{Cameron1},
Cameron's ``hot'' and ``cold'' matrices~\cite{Cameron2},
and the Hadamard maximal determinant problem~\cite{Orrick-www}.
Thus, our upper-bound results may be regarded as generalising some
known results on $\{0,\pm1\}$-matrices by incorporating a
parameter $\ve$ (or $x = 1/\ve$).\\[-20pt] 

\section{Notation and definitions}		\label{sec:notation}

All our matrices are square.
The \emph{order} of such a matrix
is the number of rows (or columns) of the matrix.
$\R^{n \times n}$ is the set of all $n \times n$ real  matrices.
Matrices are denoted by capital letters $A$ etc, and their elements by the
corresponding lower-case letters, e.g.\ $a_{i,j}$
or simply $a_{ij}$ if the meaning is clear.

\pagebreak[3]

The eigenvalues of a (square) matrix $A$ of order~$n$
are written as $\lambda_i(A)$, $1 \le i \le n$.
We define the \emph{trace}
${\rm Tr}(A) := \sum_{1 \le i \le n} a_{ii}$.
It is well-known that ${\rm Tr}(A) = \sum_{1\le i\le n} \lambda_i(A)$.

$\rho(A) := \max_{1\le i\le n} |\lambda_i(A)|$
denotes the \emph{spectral radius} of a matrix~$A$.

The identity matrix of order $n$ is denoted by $I_n$, or simply by $I$
if the order is clear from the context.
The matrix of all ones is $J$ (or $J_n$), so $J = ee^T$, where $e$ is the
(column) $n$-vector of all ones.

$U_n$ denotes the strictly upper triangular $n \times n$ matrix defined by
\[
u_{ij} = \begin{cases}
	  1 \text{ if } i < j;\\
	  0 \text{ otherwise}.
	 \end{cases}
\]

A \emph{skew-Hadamard matrix} is a Hadamard matrix $H$ satisfying the 
condition $H + H^T = 2I$.  An equivalent condition is that
$H-I$ is a skew-symmetric matrix.

Finally,
$\delta$ and $\ve$ are non-negative parameters, subject to certain
size restrictions that are specified as needed.

\pagebreak[3]
\section{Lower bounds}			\label{sec:lower}

In this section we give lower bounds on the determinant of a matrix
that is close to the identity matrix or, in the case of
Theorem~\ref{thm:nice_pert_bound}, close to a diagonal matrix.
We start with a general theorem and then deduce some corollaries
that are useful in applications. 
The proof of Theorem~\ref{thm:general_lower_bound} uses the
Fredholm determinant formula\footnote{%
Fredholm~\cite{Fredholm}, see also
Bornemann~\cite[eqn.~(3.3)]{Bornemann}, von Koch~\cite{Koch}
and Plemelj~\cite{Plemelj}.
}
in a manner similar to the proof of~\eqref{eq:Ostrowski3a}
given in~\cite{Ostrowski38}. 

\begin{theorem}	  			\label{thm:general_lower_bound}
Let $F \in \R^{n\times n}$, $f_{ij} \ge 0$, $\rho(F) \le 1$.
If $A = I - E \in \R^{n\times n}$, where $|e_{ij}| \le f_{ij}$, then
\[\det(A) \ge \det(I-F).\]
\end{theorem}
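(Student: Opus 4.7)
The plan is to use the Fredholm (Plemelj--Smithies) expansion
\[\log\det(I-tE) = -\sum_{k=1}^\infty \frac{t^k}{k}\,\mathrm{Tr}(E^k),\]
valid for $t\in(0,1)$, together with the analogous series for $F$, to reduce the matrix inequality to a scalar inequality on traces of powers, and then pass to the limit $t\to 1^-$.

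To set up the expansion, I would first confirm that $\rho(E)\le\rho(F)\le 1$. Writing $|E|$ for the entrywise absolute-value matrix, we have $|E|\le F$ entrywise, so Perron--Frobenius monotonicity of the spectral radius on the nonnegative cone gives $\rho(|E|)\le\rho(F)\le 1$, and the classical bound $\rho(E)\le\rho(|E|)$ finishes the reduction. For every $t\in(0,1)$ this yields $\rho(tE),\rho(tF)<1$, so the log series converges absolutely. One further checks that $\det(I-tE)$ and $\det(I-tF)$ are positive reals: eigenvalues of $I-tF$ lie in the open disk $|z-1|<1$, hence in the open right half-plane, and complex eigenvalues of the real matrix $E$ come in conjugate pairs, so both determinants are real and a short sign argument makes them positive. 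The ordinary real logarithm is therefore unambiguously defined on the left.

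The substance of the argument is the term-by-term comparison. Expanding
\[\mathrm{Tr}(E^k) = \sum_{i_1,\ldots,i_k} e_{i_1 i_2}\,e_{i_2 i_3}\cdots e_{i_k i_1},\]
the triangle inequality combined with $|e_{ij}|\le f_{ij}$ gives $\mathrm{Tr}(E^k)\le|\mathrm{Tr}(E^k)|\le\mathrm{Tr}(F^k)$. Multiplying by $-t^k/k<0$ reverses the inequality, so summing over $k$ yields $\log\det(I-tE)\ge\log\det(I-tF)$, and exponentiating gives $\det(I-tE)\ge\det(I-tF)$ for every $t\in(0,1)$. Since both sides are polynomials in $t$, continuity at $t=1$ delivers the desired $\det(I-E)\ge\det(I-F)$.

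I expect the main obstacle to be the bookkeeping around the logarithm and the boundary case $\rho(F)=1$: one must verify that $\det(I-tE)$ really is positive so that the real logarithm applies, and then lean on polynomial continuity to handle the limit $t\to 1^-$ even when $\det(I-F)=0$. The algebraic core, namely $\mathrm{Tr}(E^k)\le\mathrm{Tr}(F^k)$, is the easy part.
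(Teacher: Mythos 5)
Your proposal is correct and follows essentially the same route as the paper's proof: the Fredholm expansion $\det(I-tE)=\exp\bigl(-\sum_{k\ge 1}t^k\,\mathrm{Tr}(E^k)/k\bigr)$, the term-by-term comparison $\mathrm{Tr}(E^k)\le|\mathrm{Tr}(E^k)|\le\mathrm{Tr}(F^k)$, and a scaling-plus-continuity argument to cover the boundary case $\rho(F)=1$. The only cosmetic differences are that you bound $\rho(E)\le\rho(|E|)\le\rho(F)$ via Perron--Frobenius monotonicity where the paper invokes Gelfand's formula, and that you phrase the identity through the real logarithm (necessitating your positivity check) where the paper writes it directly as an exponential, which makes positivity automatic.
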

\begin{proof}
First suppose that $\rho(F) < 1$.
By Gelfand's formula for the spectral radius of a matrix~\cite{Gelfand},
\[
\rho(E) = \lim_{k\to\infty} ||E^k||_2^{1/k}
	\le \lim_{k\to\infty} ||F^k||_2^{1/k} = \rho(F) < 1,
\]
so the series 
\[\sum_{k=1}^\infty \frac{1}{k}E^k\]
converges.  
Hence, by the Fredholm determinant
formula 
\[\det(A) 
 = \exp\left(-{\rm Tr}\left(\sum_{k=1}^\infty\frac{1}{k}E^k\right)\right)
 = \exp\left(- \sum_{k=1}^\infty \frac{1}{k}{\rm Tr}(E^k)\right)\,.\]
The entries in $E^k$ are polynomials in the $e_{ij}$ with non-negative
coefficients; hence they take their maximum values when $E = F$.
The result (still under the assumption that $\rho(F) < 1)$ 
follows from the monotonicity of the exponential function.

To deal with the case $\rho(F) = 1$ we may choose any $x \in (0,1)$ and 
replace $E$ by $xE$ and
$F$ by $xF$ in the above argument, showing that
\[\det(I - xE) \ge \det(I - xF).\]
Now let $x \to 1$ and use continuity of the determinant.
\end{proof}

\begin{remark}		\label{remark:spectral_condition}
{\rm
For $n>1$, it is not possible to weaken 
the condition $\rho(F) \le 1$ in
Theorem~\ref{thm:general_lower_bound}.
For even $n$, this is shown by the counter-example
$E = I$, $F = \phi I$, where $\phi > 1$.
Counter-examples for odd $n>1$ are also easy to construct
using diagonal matrices $E$ and $F$.
}
\end{remark}

\begin{lemma}		\label{lem:super_duper}
Let $A = I-E \in \R^{n\times n}$, where $|e_{ij}| \le \ve$
for $i \ne j$, $|e_{ii}| \le \delta$ for $1 \le i \le n$,
and $\delta + (n-1)\ve \le 1$. Then
\[\det(A) \ge (1 - \delta - (n-1)\ve)
	      (1 - \delta + \ve)^{n-1}\,,\]
and the inequality is sharp.
\end{lemma}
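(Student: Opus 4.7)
The plan is to apply Theorem~\ref{thm:general_lower_bound} to a carefully chosen dominating matrix $F$ whose structure mirrors the two-level hypotheses on $E$. The natural choice is
\[F := (\delta - \ve)I + \ve J,\]
so that $f_{ii} = \delta$ and $f_{ij} = \ve$ for $i \ne j$, which guarantees $|e_{ij}| \le f_{ij}$ for all $i,j$. This is also the matrix at which we expect equality, since it saturates the hypothesised bounds simultaneously.

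The key step is then to compute the spectrum of $F$. Since $J = ee^T$ has eigenvalues $n$ (simple, with eigenvector $e$) and $0$ (with multiplicity $n-1$), the eigenvalues of $F$ are $\delta + (n-1)\ve$ (with eigenvector $e$) and $\delta - \ve$ (with multiplicity $n-1$). A routine check using $\delta, \ve \ge 0$ and the hypothesis $\delta + (n-1)\ve \le 1$ shows that both eigenvalues have absolute value at most $1$, so $\rho(F) \le 1$. Theorem~\ref{thm:general_lower_bound} therefore yields $\det(A) \ge \det(I - F)$.

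The same spectral decomposition computes the right-hand side: the eigenvalues of $I - F$ are $1 - \delta - (n-1)\ve$ and $1 - \delta + \ve$ (with multiplicity $n-1$), giving
\[\det(I-F) = \bigl(1 - \delta - (n-1)\ve\bigr)(1 - \delta + \ve)^{n-1},\]
which is exactly the asserted lower bound. Sharpness is immediate: taking $E := F$ satisfies all hypotheses of the lemma, and then $\det(A) = \det(I - F)$ realises the bound with equality.

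The only conceptually nontrivial step is guessing the right $F$; once chosen, both $\rho(F) \le 1$ and the value of $\det(I-F)$ fall out of the rank-one structure of $J$, so I do not anticipate any real obstacle.
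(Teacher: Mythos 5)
Your proposal is correct and follows essentially the same route as the paper: the paper's proof also takes $F = (\delta-\ve)I + \ve J$, computes its spectrum via the rank-one structure of $J$, verifies $\rho(F) \le 1$, applies Theorem~\ref{thm:general_lower_bound}, and obtains sharpness from $A = I - F$. The only cosmetic difference is that the paper identifies $\rho(F) = \delta + (n-1)\ve$ explicitly (and treats $n=1$ separately), whereas you simply check both eigenvalues are at most $1$ in absolute value, which suffices.
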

\begin{proof}
The result is immediate if $n=1$, so suppose that $n \ge 2$.
Define
$F := (\delta-\ve) I + \ve J$, so $F$ is a Toeplitz matrix
with diagonal entries $\delta$ and off-diagonal entries $\ve$.

Observe that $Je = ne$, so $J$ has an eigenvalue 
$\lambda_1(J) = n$; the other $n-1$ eigenvalues are zero since $J$
has rank~$1$.

Since $\ve J$ has one eigenvalue equal to $n\ve$
and $n-1$ eigenvalues equal to
zero, it is immediate that $F$ has eigenvalues 
$\delta-\ve + n\ve = \delta+(n-1)\ve$
and $\delta-\ve$. 
Thus
\[
\rho(F) = \max(\delta + (n-1)\ve, |\delta-\ve|)
 = \delta + (n-1)\ve \le 1.
\]
Also, the eigenvalues of $I-F$ are
$1 - \delta - (n-1)\ve$ with multiplicity~$1$,
and $1 - \delta + \ve$ with multiplicity~$n-1$,
so
\[
\det(I-F) = (1 - \delta - (n-1)\ve)\,
            (1 - \delta + \ve)^{n-1}\,.
\]
Thus, the inequality follows from Theorem~\ref{thm:general_lower_bound}.
It is sharp because equality holds for $A = I-F$.
\end{proof}

Corollary~\ref{cor:super_duper} is similar to
Lemma~\ref{lem:super_duper}, but the 
condition on $e_{ii}$ is one-sided.
This is useful in applications of the probabilistic method
using one-sided inequalities such as Cantelli's inequality~\cite{Cantelli},
see for example~\cite[Thms.\ 4--5]{rpb257}.

\begin{corollary}	\label{cor:super_duper}
Let $A = I-E \in \R^{n\times n}$, where $|e_{ij}| \le \ve$
for $i \ne j$ and $e_{ii} \le \delta$ for $1 \le i \le n$.
If $0 \le \delta \le 1 - (n-1)\ve$, then
\[\det(A) \ge (1 - \delta - (n-1)\ve)
	      (1 - \delta + \ve)^{n-1}\,,\]
and the inequality is sharp.
\end{corollary}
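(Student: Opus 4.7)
The plan is to deduce Corollary~\ref{cor:super_duper} from Lemma~\ref{lem:super_duper} by a monotonicity-and-induction argument on~$n$. The lemma assumes the two-sided bound $|e_{ii}| \le \delta$, whereas here only the one-sided bound $e_{ii} \le \delta$ is given, so the task is to rule out the possibility that a very negative $e_{ii}$ (i.e., a diagonal entry $a_{ii}$ much larger than~$1$) pushes $\det(A)$ below the claimed bound. Intuitively this should not happen, and the plan is to turn this intuition into a proof by showing, by induction on~$n$, that $\det(A)$ is a non-increasing function of each $e_{ii}$ throughout the admissible range.

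The base case $n=1$ is immediate, since $\det(A) = 1 - e_{11} \ge 1 - \delta$. For $n \ge 2$, fix an index~$i$ and regard $\det(A)$ as an affine function of $e_{ii}$ with the remaining entries held fixed; cofactor expansion along column~$i$ shows that its slope with respect to $e_{ii}$ is $-\det(A^{(i,i)})$, where $A^{(i,i)}$ denotes the principal submatrix obtained by deleting row~$i$ and column~$i$. This submatrix has order $n-1$, inherits the off-diagonal bound~$\ve$ and the one-sided diagonal bound~$\delta$, and satisfies $\delta + (n-2)\ve \le \delta + (n-1)\ve \le 1$, so the inductive hypothesis applies and yields
\[\det(A^{(i,i)}) \ge (1-\delta-(n-2)\ve)(1-\delta+\ve)^{n-2} \ge 0.\]

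Consequently, replacing $e_{ii}$ by the (weakly larger) value~$\delta$ can only decrease $\det(A)$. Performing this replacement sequentially for $i = 1, \ldots, n$ yields a matrix $A'$ with $e'_{ii} = \delta$ for every~$i$ and the same off-diagonal entries as~$A$, satisfying $\det(A) \ge \det(A')$; at each stage the intermediate matrix still meets all hypotheses (only diagonal entries are modified, and strictly toward~$\delta$), so the cofactor-positivity step of the previous paragraph remains valid. Since $A'$ satisfies $|e'_{ii}| = \delta$, Lemma~\ref{lem:super_duper} applied to $A'$ gives the claimed lower bound. Sharpness is inherited from the lemma: equality holds at $A = I - \bigl((\delta-\ve)I + \ve J\bigr)$, which satisfies the corollary's one-sided constraint with equality. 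The only potentially delicate point is checking that the inductive hypothesis is verified at each intermediate matrix during the sequential replacement, but this is automatic since off-diagonal entries are never touched and the diagonal condition is only tightened.
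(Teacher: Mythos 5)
Your proof is correct, but it takes a genuinely different route from the paper's. The paper also reduces to Lemma~\ref{lem:super_duper}, yet does so in a single step by diagonal scaling rather than by induction: it sets $D = \diag(d_i)$ with $d_i = \max(1, a_{ii})$, so that $A' = D^{-1}A$ satisfies the two-sided hypotheses of the lemma (the diagonal perturbations $e'_{ii}$ get clamped into $[0,\delta]$, and the off-diagonal entries can only shrink since $d_i \ge 1$), and then concludes via $\det(A) = \det(D)\det(A') \ge \det(A')$, which uses both $\det(D) \ge 1$ and $\det(A') \ge 0$, the latter being exactly the non-negativity of the lemma's lower bound. Your argument replaces this multiplicative clamping by an additive one: you raise each $e_{ii}$ up to $\delta$ and certify that this only lowers the determinant, the certificate being non-negativity of the principal minors of order $n-1$, which you obtain by induction on $n$ from the corollary itself. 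Both proofs thus rest on the same two pillars --- reduction to the two-sided lemma, plus non-negativity of the bound under the hypotheses --- but yours costs an induction while yielding as a by-product the (mild but reusable) fact that $\det(A)$ is non-increasing in each diagonal perturbation $e_{ii}$ throughout the admissible region, whereas the paper's scaling trick is shorter and induction-free. Your bookkeeping is complete: the side condition $\delta \le 1-(n-2)\ve$ for the submatrices follows from $\ve \ge 0$, the intermediate matrices in the sequential replacement visibly satisfy the hypotheses, and sharpness at $A = I - \bigl((\delta-\ve)I + \ve J\bigr)$ is the same witness the paper uses.
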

\begin{proof}
We deduce the result from Lemma~\ref{lem:super_duper}
using ``diagonal scaling''.
Let $D \in \R^{n\times n}$ be the diagonal
matrix with diagonal elements $d_i = \max(1, a_{ii})$.
Note that $d_i \ge 1$, so
$D^{-1} = \diag(d_i^{-1})$ is well-defined.
Define
$A' = D^{-1}A$ and $E' = I-A'$.  
Since $a_{ij}' = d_i^{-1}a_{ij}$,
we have $|e_{ij}'| = |d_i^{-1}e_{ij}|
 \le |e_{ij}| \le \ve$ for $i\ne j$, and
\[
e_{ii}' = \begin{cases} e_{ii} \text{ if } e_{ii} \ge 0,\\
                           0 \text{ if } e_{ii} < 0,\\
             \end{cases}
\]
so $0 \le e_{ii}' \le \delta$.
Thus, we can apply Lemma~\ref{lem:super_duper} to
$A' = I - E'$, giving
\[\det(A') \ge (1 - \delta - (n-1)\ve)
              (1 - \delta + \ve)^{n-1} \ge 0.\]
Since $\det(A) = \det(D)\det(A') \ge \det(A')$, the inequality follows.
It is sharp because equality holds if we take
$A = I-F$, where $F$ is as in the proof of Lemma~\ref{lem:super_duper}.
\end{proof}

Corollaries~\ref{cor:optimal_pert_bound}--%
\ref{cor:optimal_pert_bound2} are simple consequences
of Lemma~\ref{lem:super_duper}.
They are stated in~\cite[Lemmas~$8$--$9$]{rpb253}, but only
Corollary~\ref{cor:optimal_pert_bound} is proved there.
Corollary~\ref{cor:optimal_pert_bound} follows from Ostrowski's lower
bound~\eqref{eq:Ostrowski3a}, although Ostrowski did not explicitly state
that the lower bound is sharp, perhaps because the corresponding upper
bound~\eqref{eq:Ostrowski3b} is not sharp (see
Remark~\ref{remark:compare_upper_bds}).

\begin{corollary}	\label{cor:optimal_pert_bound}
If $A = I - E \in \R^{n\times n}$,
$|e_{ij}| \le \ve$ for $1 \le i, j \le n$,
and $n\ve \le 1$, then 
\[\det(A) \ge 1 - n\ve,\]
and the inequality is sharp.
\end{corollary}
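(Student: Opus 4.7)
The plan is to derive this corollary as an immediate specialization of Lemma~\ref{lem:super_duper}, simply by identifying the diagonal bound with the off-diagonal bound. In Lemma~\ref{lem:super_duper} the two parameters $\delta$ (controlling $|e_{ii}|$) and $\ve$ (controlling $|e_{ij}|$ for $i\ne j$) were independent. Here the hypothesis $|e_{ij}|\le\ve$ holds uniformly for \emph{all} $i,j$, so in particular the diagonal entries satisfy $|e_{ii}|\le\ve$. Hence I can apply Lemma~\ref{lem:super_duper} with $\delta := \ve$.

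Under this substitution, the side condition $\delta+(n-1)\ve\le 1$ collapses to exactly the assumed condition $n\ve\le 1$. The lower bound from Lemma~\ref{lem:super_duper} becomes
\[
(1-\delta-(n-1)\ve)(1-\delta+\ve)^{n-1}
 = (1-\ve-(n-1)\ve)(1-\ve+\ve)^{n-1}
 = (1-n\ve)\cdot 1^{n-1} = 1-n\ve,
\]
which is precisely the claimed bound. So the first half of the corollary requires essentially no new work beyond this substitution and arithmetic simplification.

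For sharpness, I would exhibit an explicit extremal matrix. Taking $E = \ve J$ (the all-$\ve$ matrix) trivially satisfies $|e_{ij}|\le\ve$ for every $i,j$. Then $A = I-\ve J$ has eigenvalues $1-n\ve$ (with multiplicity~$1$, from the eigenvector $e$) and $1$ (with multiplicity $n-1$, from the orthogonal complement of $e$), using the rank-one structure of $J$ already exploited in the proof of Lemma~\ref{lem:super_duper}. Hence $\det(A) = 1-n\ve$ exactly, showing the bound cannot be improved. Equivalently, this $E$ is the matrix $F$ from the proof of Lemma~\ref{lem:super_duper} in the case $\delta=\ve$, so the extremal example is inherited directly.

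There is no real obstacle here: the content of the corollary is already contained in Lemma~\ref{lem:super_duper}, and the only step requiring any care is verifying that the boundary case $\delta=\ve$ is permitted by the lemma's hypotheses and yields exactly the stated bound after simplification. The whole proof should fit in two or three lines.
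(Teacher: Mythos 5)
Your proposal is correct and is essentially identical to the paper's own proof: the paper likewise obtains the corollary as the case $\delta=\ve$ of Lemma~\ref{lem:super_duper}, with sharpness witnessed by $E=\ve J$ (which is exactly the matrix $F$ of the lemma's proof under this substitution). Nothing is missing; your verification of the side condition and the eigenvalue computation for $I-\ve J$ are exactly the details the paper leaves implicit.
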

\begin{proof}
This is the case $\delta=\ve$ of Lemma~\ref{lem:super_duper}.
Equality occurs when $E = \varepsilon J$.
\end{proof}

Corollary~\ref{cor:optimal_pert_bound2} is sharper than Ostrowski's
bound~\eqref{eq:Ostrowski2a} if $n > 2$ (they are the same
if	
$n \le 2$).
Corollary~\ref{cor:optimal_pert_bound2} is also sharper than von Koch's
bound~\eqref{eq:Koch}. This is perhaps surprising, since the proofs of both
results depend (directly or indirectly) on Fredholm's determinant formula.
\begin{corollary}	\label{cor:optimal_pert_bound2}
If $A = I - E \in \R^{n\times n}$, 
$|e_{ij}| \le \ve$ for $1 \le i, j \le n$,
$e_{ii} = 0$ for $1 \le i \le n$,
and $(n-1)\ve \le 1$, then 
\[
\det(A) \ge \left(1 - (n-1)\ve\right)\,(1+\ve)^{n-1},
\]
and the inequality is sharp.
\end{corollary}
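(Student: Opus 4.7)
The plan is to derive the bound as the special case $\delta=0$ of Lemma~\ref{lem:super_duper}. The hypothesis $e_{ii}=0$ means we can take the diagonal bound to be $\delta=0$, and the hypothesis $(n-1)\ve\le 1$ is precisely the condition $\delta+(n-1)\ve\le 1$ in that case, so Lemma~\ref{lem:super_duper} applies without modification.

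Substituting $\delta=0$ into the conclusion of Lemma~\ref{lem:super_duper} yields the factor $1-(n-1)\ve$ (from the first parenthesis) times $(1+\ve)^{n-1}$ (from the second), which matches the stated bound exactly. There is essentially no additional work beyond citing the lemma.

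For sharpness, I would inspect the extremal matrix used in the proof of Lemma~\ref{lem:super_duper}: the Toeplitz matrix $F=(\delta-\ve)I+\ve J$. Specialising to $\delta=0$ gives $F=\ve(J-I)$, whose diagonal is zero and whose off-diagonal entries are $\ve$; thus $E=F$ satisfies the corollary's hypotheses ($|e_{ij}|\le\ve$ for all $i,j$ and $e_{ii}=0$). The determinant computation from Lemma~\ref{lem:super_duper} (using that $J$ has eigenvalue $n$ simple and $0$ with multiplicity $n-1$) immediately shows $\det(I-F)=(1-(n-1)\ve)(1+\ve)^{n-1}$, so equality is attained.

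There is no serious obstacle: the entire content is a one-line specialisation of the preceding lemma together with the observation that its extremal example already has zero diagonal. The only thing worth noting explicitly in the write-up is that the condition $(n-1)\ve\le 1$ is exactly what the lemma requires when $\delta=0$, so no extra hypothesis is being smuggled in.
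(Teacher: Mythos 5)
Your proposal is correct and matches the paper's own proof exactly: the paper also obtains the corollary as the case $\delta=0$ of Lemma~\ref{lem:super_duper}, with sharpness witnessed by $E=\ve(J-I)$. Nothing is missing; your additional remarks (checking the hypothesis correspondence and recomputing $\det(I-F)$ via the eigenvalues of $J$) simply spell out details the paper leaves implicit.
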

\begin{proof}
This is the case $\delta=0$ of Lemma~\ref{lem:super_duper}.
Equality occurs when {$E = \varepsilon(J-I)$}.
\end{proof}

The results presented so far apply to perturbations of the
identity matrix.  To bound the determinant of a perturbed diagonal
matrix~$A$,
we can first multiply it by a diagonal matrix approximating $A^{-1}$.
Theorem~\ref{thm:nice_pert_bound} uses this ``preconditioning'' idea to
give a lower bound on the determinant of a
diagonally dominant matrix.
A similar idea was used in the proof of
Corollary~\ref{cor:super_duper} above.

\begin{theorem}	\label{thm:nice_pert_bound}
If $A \in \R^{n\times n}$
satisfies $|a_{ij}| \le \ve|a_{ii}|$ for all $i \ne j$, 
$1 \le i, j \le n$,
then
\[|\det(A)| \ge \left(\prod_{i=1}^n |a_{ii}|\right)
 (1-(n-1)\ve)\,(1+\ve)^{n-1}.
\]
\end{theorem}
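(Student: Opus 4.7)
The plan is to reduce the theorem to Corollary~\ref{cor:optimal_pert_bound2} by a diagonal (left) scaling, exactly the ``preconditioning'' idea flagged in the paragraph preceding the statement. This is the same device used inside the proof of Corollary~\ref{cor:super_duper}, but now applied to the whole matrix rather than one-sidedly to the diagonal.

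First I would dispose of the degenerate cases. If $(n-1)\ve > 1$, then $1-(n-1)\ve < 0$ while $(1+\ve)^{n-1} \ge 0$, so the right-hand side is non-positive and the inequality is automatic. If some $a_{ii}=0$, the hypothesis $|a_{ij}|\le\ve|a_{ii}|$ forces the entire $i$th row of $A$ to vanish, whence $\det(A)=0$ and again the bound holds (with the product on the right equal to zero). So we may assume every $a_{ii}\neq 0$ and $(n-1)\ve\le 1$.

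Next, let $D=\diag(a_{11},\ldots,a_{nn})$, which is now invertible, and set $B:=D^{-1}A$. A direct entrywise computation gives $b_{ii}=1$ and $b_{ij}=a_{ij}/a_{ii}$ for $i\ne j$, so writing $B=I-E$ we obtain $e_{ii}=0$ and
\[
|e_{ij}| \;=\; \frac{|a_{ij}|}{|a_{ii}|} \;\le\; \ve \qquad (i\ne j).
\]
Hence $B$ satisfies precisely the hypotheses of Corollary~\ref{cor:optimal_pert_bound2}, which yields
\[
\det(B)\;\ge\;(1-(n-1)\ve)\,(1+\ve)^{n-1}\;\ge\;0.
\]

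Finally, the multiplicativity of the determinant gives $\det(A)=\det(D)\det(B)=\bigl(\prod_i a_{ii}\bigr)\det(B)$, and taking absolute values (using that $\det(B)\ge 0$) produces
\[
|\det(A)| \;=\; \Bigl(\prod_{i=1}^n |a_{ii}|\Bigr)\,\det(B) \;\ge\; \Bigl(\prod_{i=1}^n |a_{ii}|\Bigr)\,(1-(n-1)\ve)\,(1+\ve)^{n-1},
\]
which is the desired bound. There is no real obstacle here: once one notices that the scaling $A\mapsto D^{-1}A$ turns the relative off-diagonal bound into an absolute off-diagonal bound with unit diagonal, the result is immediate from Corollary~\ref{cor:optimal_pert_bound2}. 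The only care needed is in the handling of zero (or sign-carrying) diagonal entries, which is why absolute values appear on both sides of the conclusion.
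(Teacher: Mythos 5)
Your proof is correct and follows essentially the same route as the paper's: dispose of the cases $(n-1)\ve>1$ and $a_{ii}=0$ as trivial, then left-multiply by $\diag(a_{ii}^{-1})$ and apply Corollary~\ref{cor:optimal_pert_bound2}, using multiplicativity of the determinant. Your treatment of the edge cases and of the sign of $\det(B)$ is slightly more explicit than the paper's, but the argument is the same.
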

\begin{remark}	\label{remark:quadratic_lower_bound}
{\rm
The simpler but slightly weaker inequality
\[|\det(A)| \ge \left(\prod_{i=1}^n |a_{ii}|\right)
 \left(1-(n-1)^2\ve^2\right)\]
follows easily, since
\[(1-(n-1)\ve)\,(1+\ve)^{n-1} \ge
 (1-(n-1)\ve)\,(1 + (n-1)\ve) =
 1 - (n-1)^2\ve^2.\]
}
\end{remark}
\begin{proof}[Proof of Theorem~$\ref{thm:nice_pert_bound}$]
If $(n-1)\ve \ge 1$ then the inequality is trivial as the right
side is not positive. 
Hence, assume that $0 \le (n-1)\ve < 1$.
If any $a_{ii} = 0$ then the result is trivial. Otherwise,
apply Corollary~\ref{cor:optimal_pert_bound2} to $SA$, 
where $S = {\rm diag}(a_{ii}^{-1})$.
Since $\det(A) = \det(SA)\prod_i a_{ii}$,
the result follows. 
\end{proof}

\begin{remark}
{\rm
The bound of
Theorem~\ref{thm:nice_pert_bound} is much stronger than the bound
\[ |\det(A)| \ge \left(\prod_{i=1}^n |a_{ii}|\right)
   \left(1-(n-1)\ve\right)^n \]
that follows from Gerschgorin's theorem 
or Ostrowski's inequality~\eqref{eq:Ostrowski1a}.
For example, if $a_{ii} = 1$ for $1\le i\le n$ and $(n-1)\ve = 1/2$,
then Theorem~\ref{thm:nice_pert_bound} gives the lower bound $3/4$,
whereas Gerschgorin's theorem and
Ostrowski's inequality~\eqref{eq:Ostrowski1b}
both give $2^{-n}$.
Theorem~\ref{thm:nice_pert_bound} is stronger than
Ostrowski's improved lower 
bound~\eqref{eq:Ostrowski2a} if $n > 2$;
the bound given in Remark~\ref{remark:quadratic_lower_bound} 
is stronger than~\eqref{eq:Ostrowski2a} if $n > 3$.

To illustrate the lower bounds that apply when $\diag(A) = I$,
suppose that $n=5$ and $\ve = 1/8$.
Then Gerschgorin/Ostrowski~\eqref{eq:Ostrowski1b}
gives the lower bound $2^{-5} = 0.03125$,
von Koch~\eqref{eq:Koch} gives $e^{5/2}/2^5 \approx 0.3807$,
Ostrowski~\eqref{eq:Ostrowski2a} gives $9/16 = 0.5625$,
Remark~\ref{remark:quadratic_lower_bound} gives $3/4 = 0.75$,
Corollary~\ref{cor:optimal_pert_bound2} and Theorem~\ref{thm:nice_pert_bound}
give $3^8/2^{13} \approx 0.8009$.
}
\end{remark}

\section{Upper bounds}			\label{sec:upper}

In this section we give 
upper bounds on $\det(A)$ to complement the lower
bounds of \S\ref{sec:lower}.
Theorem~\ref{thm:general_upper_bound} gives upper bounds
analogous to the lower bounds in
Corollaries~\ref{cor:optimal_pert_bound}--\ref{cor:optimal_pert_bound2}. 
The upper bounds in Theorem~\ref{thm:general_upper_bound}
follow easily from the classical Hadamard
bound~\cite{Hadamard,HLP,MS}. Given $n$, we may ask for which $\ve$ the
inequalities of Theorem~\ref{thm:general_upper_bound} are attainable.
This question is closely related to the question of existence of a
skew-Hadamard matrix of order~$n$, as shown by
Theorem~\ref{thm:nec_sharpness}. Before proving
Theorem~\ref{thm:nec_sharpness}, we consider some small examples
to illustrate how the optimal upper bound depends on arithmetic
properties of the order~$n$ (unlike the optimal lower bound).

\pagebreak[3]

\begin{theorem}			\label{thm:general_upper_bound}
If $A = I - E \in \R^{n\times n}$,
$|e_{ij}| \le \ve$ for $1 \le i, j \le n$, then
\begin{equation}		\label{eq:upper1}
\det(A) \le (1 + 2\ve + n\ve^2)^{n/2}.
\end{equation}
If, in addition, $e_{ii} = 0$ for $1 \le i \le n$, then
\begin{equation}		\label{eq:upper2}
\det(A) \le (1 + (n-1)\varepsilon^2)^{n/2}.
\end{equation}
\end{theorem}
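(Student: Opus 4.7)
The plan is to obtain both bounds as essentially immediate consequences of Hadamard's inequality applied to the rows of $A$. Recall that for any real matrix $A$ with rows $a_1,\dots,a_n$, Hadamard's inequality gives $|\det(A)| \le \prod_{i=1}^n \|a_i\|_2$, and certainly $\det(A) \le |\det(A)|$.

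For the first bound \eqref{eq:upper1}, I would write out row $i$ of $A = I - E$: the diagonal entry is $1 - e_{ii}$ and the off-diagonal entries are $-e_{ij}$ for $j \ne i$. From $|e_{ii}| \le \ve$ we get $(1-e_{ii})^2 \le (1+\ve)^2 = 1 + 2\ve + \ve^2$, while $\sum_{j \ne i} e_{ij}^2 \le (n-1)\ve^2$. Summing gives
\[
\|a_i\|_2^2 \le 1 + 2\ve + \ve^2 + (n-1)\ve^2 = 1 + 2\ve + n\ve^2,
\]
uniformly in $i$, and then Hadamard's inequality yields \eqref{eq:upper1}.

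For the second bound \eqref{eq:upper2}, the extra hypothesis $e_{ii}=0$ makes the diagonal entry of $A$ exactly $1$, so the same computation gives $\|a_i\|_2^2 \le 1 + (n-1)\ve^2$, and Hadamard's inequality yields \eqref{eq:upper2}.

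There is no real obstacle here: both inequalities are one-line consequences of bounding each row's Euclidean norm in the obvious way and invoking Hadamard. The only mild subtlety is that Hadamard bounds $|\det(A)|$ rather than $\det(A)$, but since the claim is an upper bound on $\det(A)$ this is a free improvement. The deeper content of the theorem — namely when the bounds are attained — is deferred to Theorem~\ref{thm:nec_sharpness} and requires the existence of a skew-Hadamard matrix, but that is not part of the statement to be proved here.
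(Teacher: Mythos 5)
Your proof is correct and follows essentially the same route as the paper: the paper also applies Hadamard's inequality (to the columns rather than the rows, an immaterial difference) and bounds each norm by $(1+\ve)^2+(n-1)\ve^2 = 1+2\ve+n\ve^2$, respectively $1+(n-1)\ve^2$ when $\diag(E)=0$.
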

\begin{proof}
Let the columns of $A$ be $u_1, u_2, \ldots, u_n$.  From Hadamard's
inequality,
\[\det(A) \le \prod_{i=1}^n ||u_i||_2\,.\]
However, the condition $|e_{ij}| \le \ve$ implies that
\[||u_i||_2^2 \le (1+\ve)^2 + (n-1)\ve^2
	= 1 + 2\ve + n\ve^2.\]
Hence, the result~\eqref{eq:upper1} follows.
The proof of~\eqref{eq:upper2} is similar.
\end{proof}
\begin{remark}	\label{remark:compare_upper_bds}
{\rm
In view of Lemma~\ref{lem:compare_O55} below,
the inequality~\eqref{eq:upper1} of Theorem~\ref{thm:general_upper_bound}
is stronger than Ostrowski's upper bound~\eqref{eq:Ostrowski3b}
for all $n \ge 1$ and $\ve > 0$. Hence, Ostrowski's upper
bound~\eqref{eq:Ostrowski3b} is never sharp.
Note that
Theorem~\ref{thm:general_upper_bound} applies for all $\ve \ge 0$; there is
no need for a restriction such as $n\ve < 1$.

The upper bound~\eqref{eq:upper2} reduces to the 
Hadamard bound $n^{n/2}$ if $\ve = 1$.
We find that~\eqref{eq:upper2}
is stronger than~\eqref{eq:Ostrowski2b} if $n > 2$, and equal
if $n \le 2$, assuming that $(n-1)\ve \le 1$ since this is necessary
for the proof of~\eqref{eq:Ostrowski2b}.
For example, if $n=5$ and
$\ve = 1/8$, then~\eqref{eq:upper2} gives the upper bound
$(17/16)^{5/2} \approx 1.16365$,
and~\eqref{eq:Ostrowski2b} gives $25/16 = 1.5625$.
The best possible upper bound is
$1 + 10\ve^2 + 21\ve^4 
\approx 1.16138$
(see~\S\ref{subsec:small}).
}
\end{remark}

\begin{lemma}	\label{lem:compare_O55}
If $n \ge 1$ , $\ve > 0$, and $n\ve < 1$, then
\[\left(1 + 2\ve+n\ve^2\right)^{n/2} < \frac{1}{1-n\ve}\,\raisedot\]
\end{lemma}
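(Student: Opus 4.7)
The plan is to take logarithms of both sides (both are positive: the left because $1 + 2\ve + n\ve^2 > 0$, the right because $n\ve < 1$) and establish the equivalent inequality
\[
\frac{n}{2}\log(1 + 2\ve + n\ve^2) \;<\; -\log(1 - n\ve).
\]
I would prove this by sandwiching both sides against the common quantity $n\ve + \tfrac12(n\ve)^2$.

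For the left-hand side I would use the elementary inequality $\log(1+y) < y$, valid strictly for all $y > 0$, applied with $y = 2\ve + n\ve^2 > 0$. This gives
\[
\frac{n}{2}\log(1 + 2\ve + n\ve^2) \;<\; \frac{n}{2}(2\ve + n\ve^2) \;=\; n\ve + \tfrac12 (n\ve)^2.
\]
For the right-hand side I would use the Taylor series
\[
-\log(1 - n\ve) \;=\; \sum_{k=1}^{\infty}\frac{(n\ve)^k}{k} \;=\; n\ve + \tfrac12(n\ve)^2 + \sum_{k=3}^{\infty}\frac{(n\ve)^k}{k},
\]
which converges because $n\ve < 1$, and whose tail sum is strictly positive since $n\ve > 0$. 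Hence $-\log(1 - n\ve) > n\ve + \tfrac12(n\ve)^2$. Chaining the two strict inequalities and exponentiating then yields the claim.

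This is essentially a two-line calculation; there is no serious obstacle. The only thing worth being careful about is keeping the inequality strict: both $\log(1+y) < y$ for $y > 0$ and the positivity of the omitted tail $\sum_{k \ge 3}(n\ve)^k/k$ use that $\ve > 0$ (and $n \ge 1$) in an essential way, matching the hypotheses of the lemma exactly. No restriction beyond $n\ve < 1$ is required.
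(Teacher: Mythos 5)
Your proof is correct, but it takes a genuinely different route from the paper's. The paper raises both sides to the power $2/n$, reducing the claim to $1 + 2\ve + n\ve^2 < (1-n\ve)^{-2/n}$, and then expands the right-hand side as the binomial series $1 + 2\ve + (n+2)\ve^2 + \sum_{k\ge3}\alpha_k(n)\ve^k$, where each $\alpha_k(n)$ is a polynomial in $n$ with non-negative coefficients; the strict inequality follows by comparing quadratic coefficients ($n+2$ versus $n$) and discarding the non-negative tail. You instead work in log space and sandwich the two sides around the pivot $n\ve + \tfrac12(n\ve)^2$: the left side from above via $\log(1+y)<y$, the right side from below by truncating the series $-\log(1-n\ve)=\sum_{k\ge1}(n\ve)^k/k$. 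Both arguments locate the strictness at second order in $\ve$, as they must, since the two sides agree to first order. What your version buys is that it rests on two completely standard facts and avoids the fractional-power binomial expansion, whose coefficient non-negativity the paper asserts without detail; what the paper's version buys is brevity and the stronger intermediate statement $1+2\ve+n\ve^2 < (1-n\ve)^{-2/n}$, a term-by-term domination that exhibits the entire gap at once. Your final exponentiation step is valid since $\exp$ is strictly increasing, and your bookkeeping of where $\ve>0$ is needed for strictness matches the hypotheses exactly.
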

\begin{proof}
It is sufficient to show that
\[1 + 2\ve + n\ve^2 < (1-n\ve)^{-2/n}.\]
Expanding the right-hand side as a power series in $\ve$, we obtain
\[(1-n\ve)^{-2/n} = 1 + 2\ve + (n+2)\ve^2 + 
	\sum_{k=3}^\infty \alpha_k(n) \ve^k,\]
where the $\alpha_k(n)$ are polynomials 
in $n$, with non-negative 
coefficients.
\end{proof}

\begin{remark}					\label{remark:U}
{\rm
Some ``large'' determinants, generally smaller by $O(\ve^4)$ than the
corresponding upper bounds of Theorem~\ref{thm:general_upper_bound}, are
\begin{equation}	\label{eq:oldupper1}
\det((1+\ve)I_n + \ve(U_n - U_n^T)) =
 \frac{(1+2\ve)^n + 1}{2}
\end{equation}
and
\begin{equation}	\label{eq:oldupper2}
\det(I_n + \ve(U_n - U_n^T)) = 
 \frac{(1+\ve)^n + (1 - \ve)^n}{2}\,\raisecomma
\end{equation}
corresponding to the upper bounds~\eqref{eq:upper1} and~\eqref{eq:upper2}
respectively.\footnote{
To prove~\eqref{eq:oldupper2}, use row and column operations to transform
the matrix to tridiagonal form, then prove the result by induction on~$n$
using the $3$-term recurrence derived from the tridiagonal matrix.
Equation~\eqref{eq:oldupper1} follows from~\eqref{eq:oldupper2} by
a change of variables.
}
The upper-triangular matrix $U_n$ is defined in \S\ref{sec:notation}.
}
\end{remark}
\subsection{Small examples}	\label{subsec:small}

We illustrate the inequalities~\eqref{eq:upper2} and~\eqref{eq:oldupper2}
and give best-possible upper bounds
for small orders~$n$.  Examples for the inequalities~\eqref{eq:upper1}
and~\eqref{eq:oldupper1} may be derived by replacing
$\ve$ by $\ve/(1+\ve)$.

Consider performing an exhaustive search for the maximal determinant
(as a function of~$\ve$).
For a naive search the size of the search space is $2^{n(n-1)}$.
By using various symmetries
we can assume that the signs in the first row are all plus,
and that in the first column there are $k$ plus signs followed
by $n-k$ minus signs (for $1 \le k \le n$),
so the search space size is reduced to $n\,2^{(n-1)(n-2)}$.
An exhaustive search is feasible for $n \le 6$.
\\[-5pt]

\noindent\textbf{Order $2$.}
The extreme cases are
\begin{equation}	\label{eq:2a}
\left|
\begin{array}{cc}
1 & \;\ve\\
-\ve & 1\\
\end{array}
\right|
=
\left|
\begin{array}{cc} 
1 & -\ve\\        
\;\ve & 1\\        
\end{array}
\right|
= 1 + \ve^2.
\end{equation}
Here~\eqref{eq:upper2} and~\eqref{eq:oldupper2} are both
best possible for all $\ve > 0$.\\[-5pt] 

\noindent\textbf{Order $3$.}
An extreme case (not unique) for small $\ve$ is
\[\left|
\begin{array}{ccc}
1 & \;\ve & \;\ve\\
-\ve & 1 & \;\ve\\
-\ve & -\ve & 1\\
\end{array}
\right|
= 1 + 3\ve^2 = \frac{(1+\ve)^3 + (1 - \ve)^3}{2}
< (1 + 2\ve^2)^{3/2} = 1 + 3\ve^2 + O(\ve^4).
\]
Here~\eqref{eq:oldupper2} is best possible for $\ve \in (0,1]$,
but~\eqref{eq:upper2} is not.
Note that
\begin{equation}	\label{eq:3b}
\left|
\begin{array}{ccc}
1 & \;\ve & \;\ve\\  
-\ve & 1 & \;\ve\\   
\;\ve & -\ve & 1\\
\end{array}
\right|
= 1 + \ve^2 + 2\ve^3 
\end{equation}
is larger than $1+3\ve^2$ when $\ve > 1$.
When $\ve = 1$ we obtain (in both cases) the maximal determinant
of $4$ for $3\times 3$ $\{\pm1\}$-matrices~\cite{Orrick-www}.\\[-5pt]

\noindent\textbf{Order $4$.}
An extreme case is
\begin{equation}	\label{eq:4a}
\left|
\begin{array}{cccc}
1 & \;\ve & \;\ve & \;\ve \\
-\ve & 1 & \;\ve & -\ve \\
-\ve & -\ve & 1 & \;\ve \\
-\ve & \;\ve & -\ve & 1\\
\end{array}
\right|
= 1 + 6\ve^2 + 9\ve^4.
\end{equation}
Here~\eqref{eq:upper2} is best possible, 
but~\eqref{eq:oldupper2} is not.
Note that the matrix may be written as $(1-\ve)I + \ve H$, where
$H$
is a skew-Hadamard matrix.
Similarly for $n = 1$ and $n=2$.
It follows that 
Theorem~\ref{thm:general_upper_bound} is best possible for
$n \in \{1,2,4\}$.
This result is generalised in Theorem~\ref{thm:nec_sharpness} below.\\[-5pt]

\pagebreak[3]

\noindent\textbf{Order $5$.}
There are four cases~\eqref{eq:5a}--\eqref{eq:5d},
found by an exhaustive search. For each interval $X = (0,1/3)$,
$(1/3,3/5)$, $(3/5,1)$, $(1,\infty)$, there is a unique polynomial
that gives the maximal determinant for all $\ve\in X$.
The matrices that give each polynomial are not
unique. We give one example for each interval.

For $\ve \in [0,1/3]$, the maximal determinant is
\begin{equation}	\label{eq:5a}
\left|
\begin{array}{ccccc}
1 & \;\ve & \;\ve & \;\ve & \;\ve \\
-\ve & 1 & \;\ve & -\ve & \;\ve \\
-\ve & -\ve & 1 & \;\ve & \;\ve\\
-\ve & \;\ve & -\ve & 1 & -\ve\\
-\ve & -\ve & -\ve & \;\ve & 1\\
\end{array}
\right|
= 1 + 10\ve^2 + 21\ve^4,
\end{equation}
lying between the attainable bound~\eqref{eq:oldupper2} of
$1 + 10\ve^2 + 5\ve^4$
and the upper bound~\eqref{eq:upper2} of
$1 + 10\ve^2 + 30\ve^4 + O(\ve^6)$.

When $\ve \in (1/3,3/5]$, a larger determinant is
\begin{equation}	\label{eq:5b}
\left|
\begin{array}{ccccc}
1 & \;\ve & \;\ve & \;\ve & \;\ve \\
-\ve & 1 & \;\ve & -\ve & \;\ve \\
-\ve & -\ve & 1 & \;\ve & \;\ve\\
-\ve & \;\ve & -\ve & 1 & \;\ve\\
\;\ve & -\ve & -\ve & -\ve & 1\\
\end{array}
\right|
= 1 + 8\ve^2+6\ve^3+15\ve^4+18\ve^5.
\end{equation}
The matrices in~\eqref{eq:5a}--\eqref{eq:5b}
can be obtained by adding a border of one
row and column to the matrix given above for order~$4$.

When $\ve \in (3/5, 1]$, a larger determinant is
\begin{equation}	\label{eq:5c}
\left|    
\begin{array}{ccccc}
1 & \;\ve & \;\ve & \;\ve & \;\ve \\
-\ve & 1 & -\ve & \;\ve & -\ve \\ 
-\ve & -\ve & 1 & \;\ve & -\ve\\ 
\;\ve & -\ve & -\ve & 1 & -\ve\\ 
-\ve & -\ve & -\ve & \;\ve & 1\\   
\end{array}
\right|
= 1 + 2\ve^2+16\ve^3+21\ve^4+8\ve^5.
\end{equation}

When $\ve > 1$, a larger determinant is given by the circulant
\begin{equation}	\label{eq:5d}
\left|
\begin{array}{ccccc}
1 & \;\ve & -\ve & \;\ve & \;\ve \\
\;\ve & 1 & \;\ve & -\ve & \;\ve \\
\;\ve & \;\ve & 1 & \;\ve & -\ve\\
-\ve & \;\ve & \;\ve & 1 & \;\ve\\
\;\ve & -\ve & \;\ve & \;\ve & 1\\
\end{array}
\right|
= 1 + 10\ve^3 + 15\ve^4 + 22\ve^5.
\end{equation}

When $\ve = 1$, the three cases~\eqref{eq:5b}--\eqref{eq:5d} all
give the maximal determinant $48$ for
$5 \times 5$ $\{\pm1\}$-matrices, see~\cite{Mood,Orrick-www}.\\[-5pt]

\pagebreak[3]

\noindent\textbf{Order $6$.}
There are three cases~\eqref{eq:6a}--\eqref{eq:6c}, 
found by an exhaustive search.
For $\ve \in [0, \ve_1]$, where $\ve_1 \approx 0.3437$,
the maximal determinant is
\begin{equation}	\label{eq:6a}
\left|
\begin{array}{cccccc}
1 & \;\ve & \;\ve & \;\ve & \;\ve & \;\ve\\
-\ve & 1 & \;\ve & \;\ve & \;\ve & -\ve\\
-\ve & -\ve & 1 & \;\ve & -\ve & \;\ve\\
-\ve & -\ve & -\ve & 1 & \;\ve & \;\ve\\
-\ve & -\ve & \;\ve & -\ve & 1 & \;\ve\\
-\ve & \;\ve & -\ve & -\ve & -\ve & 1\\ 
\end{array}
\right| = 1 + 15\ve^2 + 63\ve^4 + 81\ve^6,
\end{equation}
lying between
the attainable bound~\eqref{eq:oldupper2} of
$1 + 15\ve^2 + 15\ve^4 + \ve^6$
and the upper bound~\eqref{eq:upper2} of
$1 + 15\ve^2 + 75\ve^4 + 125\ve^6$.
The matrix in~\eqref{eq:6a}
can be written in block form $\binom{\;\;C \;\;D}{-D \;C}$,
where $C$ and $D$ are $3\times 3$ matrices. 

When $\ve \in (\ve_1,1]$, a larger determinant is
\begin{equation}	\label{eq:6b}
\left|
\begin{array}{cccccc}
1 & \;\ve & \;\ve & \;\ve & \;\ve & \;\ve\\
\;\ve & 1 & -\ve & -\ve & -\ve & -\ve\\
-\ve & \;\ve & 1 & \;\ve & -\ve & -\ve\\
\;\ve & -\ve & -\ve & 1 & -\ve & -\ve\\
-\ve & \;\ve & -\ve & \;\ve & 1 & -\ve\\
-\ve & \;\ve & -\ve & \;\ve & -\ve & 1\\ 
\end{array}
\right| = 1 + 3\ve^2+32\ve^3+63\ve^4+48\ve^5+13\ve^6.
\end{equation}
By equating the polynomials~\eqref{eq:6a} and \eqref{eq:6b} we see that the
crossover point $\ve_1 \approx 0.3437$ is the real zero of the cubic
$17\ve^3 + 5\ve^2+5\ve-3$.

When $\ve \in (1,\infty)$, a larger determinant is
\begin{equation}	\label{eq:6c}
\left|
\begin{array}{cccccc}
1 & \;\ve & \;\ve & \;\ve & \;\ve & \;\ve\\
\;\ve & 1 & \;\ve & \;\ve & -\ve & -\ve\\
\;\ve & \;\ve & 1 & -\ve & \;\ve & -\ve\\
-\ve & -\ve & \;\ve & 1 & \;\ve & -\ve\\
-\ve & \;\ve & -\ve & \;\ve & 1 & -\ve\\
-\ve & \;\ve & \;\ve & -\ve & -\ve & 1\\ 
\end{array}
\right| = 1 + 3\ve^2+16\ve^3+15\ve^4+125\ve^6.
\end{equation}
The coefficient $125$ of $\ve^6$ in~\eqref{eq:6c}
is the maximal determinant
of a $6\times 6$ matrix with zero diagonal and elements in $[-1,1]$.
Similarly for the high-order coefficients
in the other cases~\eqref{eq:2a}, \eqref{eq:3b}, \eqref{eq:4a},
and~\eqref{eq:5d} that apply for large~$\ve$.

When $\ve=1$,
all three of~\eqref{eq:6a}--\eqref{eq:6c} give the maximal determinant
$160$ for $6\times 6$ $\{\pm1\}$-matrices~\cite{Orrick-www,Williamson}.

\subsection{A condition for sharpness of
	    Theorem~\ref{thm:general_upper_bound}} \label{subsec:nec}

Theorem~\ref{thm:nec_sharpness} gives a necessary and sufficient condition for
the upper bound~\eqref{eq:upper2} of 
Theorem~\ref{thm:general_upper_bound} to be best possible.
An analogous result holds for the upper bound~\eqref{eq:upper1},
by the transformation $\ve \mapsto \ve/(1+\ve)$.

\begin{theorem}			\label{thm:nec_sharpness}
Let $H\in \R^{n\times n}$ be such that
$|h_{ij}| \le 1$ for $1\le i,j\le n$ and
\begin{equation}			\label{eq:sharp}
\det[(1-\ve)I + \ve H] = (1 + (n-1)\ve^2)^{n/2}
\end{equation}
for all $\ve \in (0,\ve_0)$, where $\ve_0$ is some positive constant. 
Then $H$ is a skew-Hadamard matrix.
Conversely, if $H$ is a skew-Hadamard matrix of order~$n$,
then equation~$\eqref{eq:sharp}$ holds for all $\ve\in \R$.
\end{theorem}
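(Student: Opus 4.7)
The plan is to trace the equality case of Theorem~\ref{thm:general_upper_bound}, augmented by an AM--GM trace estimate, and then extract the skew-Hadamard structure of $H$ by matching polynomial coefficients in $\ve$. Set $B := (1-\ve)I + \ve H$; by continuity at $\ve=0$ the hypothesis gives $\det(B)^2 = (1 + (n-1)\ve^2)^n$ on $(0,\ve_0)$, and I restrict attention to $\ve \in (0, \min(\ve_0, 1))$. On this interval I would chain three inequalities,
\[
\det(B)^2 \;\le\; \prod_{i=1}^n \|b_i\|_2^2 \;\le\; \Bigl(\tfrac{1}{n}{\rm Tr}(BB^T)\Bigr)^{n} \;\le\; \bigl(1 + (n-1)\ve^2\bigr)^n,
\]
where the $b_i$ are the rows of $B$. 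The three steps are Hadamard's inequality, AM--GM, and the elementary bound ${\rm Tr}(BB^T) = \sum_i (1-\ve+\ve h_{ii})^2 + \ve^2 \sum_{i\ne j} h_{ij}^2 \le n(1 + (n-1)\ve^2)$, using $|h_{ij}| \le 1$ together with the observation that $(1-\ve+\ve h_{ii})^2 \le 1$ whenever $\ve\in(0,1)$ and $h_{ii}\in[-1,1]$.

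The hypothesis forces every inequality in the chain to be an equality, so simultaneously the rows of $B$ are orthogonal, the $\|b_i\|_2^2$ are all equal, each $h_{ii} = 1$, and $h_{ij}^2 = 1$ for $i \ne j$. Orthogonality and equal row norms give $BB^T = cI$, and the trace equality pins down $c = 1 + (n-1)\ve^2$, yielding $BB^T = (1+(n-1)\ve^2)I$ on the interval, and hence on all of $\R$ since both sides are polynomial in $\ve$. Expanding $BB^T = (1-\ve)^2 I + (1-\ve)\ve(H + H^T) + \ve^2 HH^T$ and matching coefficients of $\ve$ and $\ve^2$ gives $H + H^T = 2I$ and $HH^T = nI$. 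Combined with $h_{ii}=1$ and $h_{ij}\in\{\pm1\}$ for $i\ne j$, $H$ is a skew-Hadamard matrix.

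For the converse, if $H$ is skew-Hadamard, a direct computation using $H + H^T = 2I$ and $HH^T = nI$ gives $BB^T = (1-\ve)^2 I + 2(1-\ve)\ve I + n\ve^2 I = (1+(n-1)\ve^2)I$, so $\det(B)^2 = (1+(n-1)\ve^2)^n$. The polynomial $\det(B)$ equals $1$ at $\ve = 0$ and its square is strictly positive on $\R$, so it cannot change sign, giving $\det(B) = (1+(n-1)\ve^2)^{n/2}$ for all $\ve \in \R$.

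The main obstacle is organizing the equality chain so that it simultaneously yields the matrix identity $BB^T = (1+(n-1)\ve^2)I$ and the entry-wise conclusions $h_{ii}=1$, $h_{ij}\in\{\pm1\}$: without the latter, $H$ would only be forced to satisfy $H + H^T = 2I$ and $HH^T = nI$, which alone do not imply the $\{\pm 1\}$ entry structure needed for $H$ to be a Hadamard matrix. This is precisely where the hypothesis $|h_{ij}|\le 1$ enters in a non-negotiable way.
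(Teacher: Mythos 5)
Your proposal is correct, and its forward direction is genuinely different from the paper's. The paper exploits the fact that both sides of \eqref{eq:sharp} are polynomials in $\ve$ (the right side precisely when $n=1$ or $n$ is even): agreement on an open interval then forces agreement at $\ve=1$, so $\det(H)=n^{n/2}$ and $H$ is Hadamard by the equality case of Hadamard's determinant bound; the skew structure is then read off by matching coefficients of $\ve$ (giving $\diag(H)=I$) and of $\ve^2$ (giving $h_{ij}h_{ji}=-1$ for $i\ne j$) in the expansion of $\det[(1-\ve)I+\ve H]$. You instead work at each fixed small $\ve$, forcing equality through the chain Hadamard inequality--AM--GM--trace bound; this simultaneously yields the entrywise facts $h_{ii}=1$, $h_{ij}=\pm1$ and the Gram identity $BB^T=(1+(n-1)\ve^2)I$, which you then globalize by polynomial interpolation in $\ve$ to extract $H+H^T=2I$ and $HH^T=nI$. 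Squaring the hypothesis at the outset neatly sidesteps the parity issue the paper must address (for odd $n>1$ the right side of \eqref{eq:sharp} is not a polynomial), at the cost of invoking the equality cases of two further inequalities (AM--GM and your trace estimate); and your closing observation is exactly the right one: the Gram conditions alone do not force $\pm1$ entries, so the entrywise conclusions harvested from the equality chain are indispensable, just as the hypothesis $|h_{ij}|\le 1$ is indispensable in the paper's use of Hadamard's equality case. The two treatments of the converse are essentially identical.
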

\pagebreak[3]

\begin{proof}
First suppose that~\eqref{eq:sharp} holds for all $\ve \in (0,\ve_0)$.
The left-hand side of~\eqref{eq:sharp} is a polynomial of degree~$n$
in $\ve$, say $P(\ve)$.  
The right-hand side of~\eqref{eq:sharp}, say $Q(\ve)$, is a polynomial
if and only if $n = 1$ or $2|n$.  If $Q(\ve)$ is a polynomial, then it
must be identically equal to $P(\ve)$, since the two polynomials agree
on a non-empty open set.  Thus,~\eqref{eq:sharp} must hold for
all $\ve > 0$, in particular for $\ve=1$.
Substituting $P(1) = Q(1)$ 
shows that $\det(H) = n^{n/2}$.
Since $|h_{ij}| \le 1$, it follows that $H$ is a Hadamard matrix. 

Expanding $\det[(1-\ve)I + \ve H]$ in ascending powers of $\ve$, we see that
\[
\det[(1-\ve)I + \ve H] = \prod_{i=1}^n(1 + (h_{ii}-1)\ve) + O(\ve^2)
 = 1 + \ve\sum_{i=1}^n (h_{ii}-1) + O(\ve^2).
\]
Since the right-hand side of~\eqref{eq:sharp} is 
$1 + O(\ve^2)$, we must have
\[
\sum_{i=1}^n (h_{ii}-1) = 0,
\]
but $h_{ii} \le 1$, so $h_{ii} = 1$ for $1 \le i \le n$.
This proves that $\diag(H) = I$.  Hence $\diag((1-\ve)I + \ve H) = I$.

Expanding $\det[(1-\ve)I + \ve H]$ again, and
using $\diag[(1-\ve)I + \ve H] = I$,
we see that
\[
\det[(1-\ve)I + \ve H] = 1 - k\ve^2 + O(\ve^3),
\]
where
\[
k = \sum_{1 \le i < j \le n} h_{ij}h_{ji}\,.
\]
The right-hand side of~\eqref{eq:sharp} is
\[1 + \frac{n(n-1)}{2}\,\ve^2 + O(\ve^3),\]
so $k = -n(n-1)/2$.
Each of the $n(n-1)/2$ terms $h_{ij}h_{ji}$ is $\pm 1$, so they must all
be $-1$.  Thus $(h_{ij}, h_{ji}) = (+1,-1)$ or $(-1,+1)$, implying that
$h_{ij}+h_{ji} = 0$ for all $i \ne j$.
This proves that $H$ is skew-Hadamard.

For the converse, suppose that $H$ is a skew-Hadamard matrix of order~$n$,
and let $A = A(\ve) = (1-\ve)I + \ve H$.
Then, using $H^TH = nI$, we have
\begin{eqnarray*}
A^TA 	&=& [(1-\ve)I + \ve H^T]\,[(1-\ve)I + \ve H]\\
	&=& [(1-\ve)^2 + 2\ve(1-\ve) + n\ve^2]\,I\\
	&=& [1 + (n-1)\ve^2]\,I.
\end{eqnarray*}
Thus
\[\det[A(\ve)]^2 = \det[A(\ve)^TA(\ve)] = (1 + (n-1)\ve^2)^n\]
and
\begin{equation}		\label{eq:pm}
\det[A(\ve)] = \pm(1 + (n-1)\ve^2)^{n/2}.
\end{equation}
Now $\det[A(\ve)] > 0$ for all sufficiently small $\ve$, so the positive sign
must apply in~\eqref{eq:pm} for such $\ve$.
Since $\det[A(\ve)]$ is a continuous
function of $\ve$, it follows that 
the positive sign must apply in~\eqref{eq:pm} 
for all $\ve \in \R$.
Thus~\eqref{eq:sharp} holds for all $\ve \in \R$.
\end{proof}

\subsection*{Acknowledgements}
We thank L.~N.~(Nick) Trefethen for his comments and assistance with the
references.
The first author
was supported in part by Australian Research Council grant DP140101417.

\pagebreak[3]

\end{document}